\newtheorem{theorem}{Theorem}[section]
\newtheorem{lemma}{Lemma}[section]
\newtheorem{corollary}{Corollary}[section]
\newtheorem{proposition}{Proposition}[section]
\newcommand{\M}{\mathbb{M}_{n}}
\begin{document}

\title[A note on Positive Partial Transpose Blocks]{A note on Positive Partial Transpose Blocks}

\author[M. Alakhrass]{Mohammad Alakhrass}

\address{ Department of Mathematics, University of Sharjah , Sharjah 27272, UAE.}
\email{\textcolor[rgb]{0.00,0.00,0.84}{malakhrass@sharjah.ac.ae}}

\subjclass[2010]{15A18, 15A42, 15A45, 15A60.}

\keywords{Block matrices; Positive partial transpose matrices; Loewner order; Unitarily invariant norm}

\begin{abstract}
In this article, we study the class of PPT blocks. We introduce several inequalities, related to this class,
with emphasis on comparing the main diagonal and the off-diagonal components of a $2 \times 2$ PPT block.
\end{abstract}

\maketitle

\section{Introduction}
Let $\M$ be the algebra of all $n \times n$ complex matrices. For $X \in \mathbb{M}_n$, the notation $X\geq 0$ (resp. $X>0$) will be used to mean that $X$ is positive semidefinite (resp. positive definite). If $X, Y \in \mathbb{M}_n$ are  two Hermitian matrices in $\mathbb{M}_n$,
we write $X \leq Y$ to mean $Y-X \geq 0$. The unitarily invariant norm of $X \in \M$ is denoted by $ \| X \|$. Recall that a norm $\|\cdot\|$ on $\mathbb{M}_n$ is said to be unitarily invariant  if it satisfies the property $\|UXV\|$ for all $X\in\mathbb{M}_n$ and all unitaries $U,V\in\mathbb{M}_n.$

Let $A, B, X \in \M$. Throughout this note, we consider the $2 \times 2$ block matrix $H$ in the form
$$
H=
\left(
\begin{array}{cc}
A & X \\
X^* & B \\
\end{array}
\right)
.$$
It is well known that $H$ positive if and only if the Schur complement of $A$ in $H$ is positive semidefinite provided that $A$ is strictly positive.
That is, $H \geq 0$ if and only if
\begin{equation}\label{Schure criterion}
H /A= B- X^* A^{-1} X \geq 0.
\end{equation}

The $2\times 2$ blocks play an important role in studying matrices and positive matrices in particular. Bhatia book \cite{B-Book 2-2007} provides
a comprehensive survey about block matrices. Furthermore, a positive $2\times 2$ block can be a very useful tool in studying sectorial matrices, see for example \cite{Alakhrass-2021}, \cite{Alakhrass-2020} and \cite{Alakhrass-2019}.

The the partial transpose of the block $H$ is defined by
$$
H^{\tau}=
\left(
\begin{array}{cc}
A & X^* \\
X & B \\
\end{array}
\right).
$$
It is quite clear that the positivity of $H$ does not, in general, imply the positivity of $ H^{\tau}$.
The block $H$ is said to be positive partial transpose, or PPT for short, if both $H$ and $H^{\tau}$ are positive semidefinite. The Schure criterion for positivity implies that $H$ is PPT if and only if
$$
B- X^* A^{-1} X \geq 0 \quad \text{and} \quad B- X A^{-1} X^* \geq 0,
$$
provided that $A$ is strictly positive.

The  PPT criterion ( also called Peres–Horodecki criterion ) plays an important roll in the quantum information theory. For example, PPT
condition is a necessary condition for a mixed quantum state to be separable. Moreover, in low dimensional composite spaces ( two and three) this condition (PPT) is also sufficient. See \cite{HORODECKI-1996} and \cite{Peres-1996}. \\

The class of PPT matrices possess many interesting properties. Therefore, it has attracted a huge interest. See
\cite{T-Ando, Choi-2018, Choi-2017, Hiroshima-2003, Lee-2015, Migh-2015,  Migh-Hiroshima-2015}.

Given a PPT block
$
H=
\left(
\begin{array}{cc}
A & X \\
X^* & B \\
\end{array}
\right).
$
It is well known that for any unitarily invariant norm

\begin{equation}\label{Hiroshima-1}
|| H || \leq || A+B ||.
\end{equation}

This result due to Hiroshima \cite{Hiroshima-2003}. See also \cite{Migh-Hiroshima-2015} for related results.
A stronger result can be obtained when $X$ is Hermitian. In fact it was shown in \cite{Bourin-2013} that if $X$ is Hermitian, then

\begin{equation}\label{Hiroshima-2}
H=
\left(
\begin{array}{cc}
A & X \\
X & B \\
\end{array}
\right)
=\frac{1}{2} \left[U \left(A+B \right)U^* + V \left(A+B \right) V^*   \right],
\end{equation}
for some isometries $U, V \in \mathbb{M}_{2n \times n}$. Here $\mathbb{M}_{2n \times n}$ is the space of all $2n \times n$ complex
matrices.
The inequality \eqref{Hiroshima-1} and the
identity \eqref{Hiroshima-2} present connections between the whole block $H$
and its main diagonal components $A$ and $B$. Recently, some interesting inequalities, connecting the main and the off-diagonal of the PPT Block $H$, have been established. For example, in \cite{Migh-2015}, Lin proved that if $H$ is PPT,  then
\begin{equation}\label{Ming-tr}
tr \left( X^*X \right) \leq tr \left( AB \right)
\end{equation}

In the sense of Loewner, it has been proved, in \cite{Lee-2015}, that for some unitary $U \in \M$
\begin{equation}\label{Lee-Gmean}
|X| \leq  \frac{A \# B + U^*(A \# B)U }{2}.
\end{equation}

An improvement of the inequality \eqref{Lee-Gmean}  was given in \cite{Pan-Fu-2021}, the authors proved that if $H$ is PPT, then
\begin{equation}\label{Pan-Fu}
|X| \leq \left ( A \# B \right)  \#  U^*(A \# B)U,
\end{equation}
for some unitary $U \in \M$.

In this paper, we show that if $H$ is PPT and $t \in [0,1]$, then
\begin{align}\label{MainRR}
|X| &\leq \left ( A \#_t B \right)  \#  U^*(A \#_{1-t} B)U \notag \\
& \leq \frac{A \#_t B + U^*(A \#_{1-t} B)U }{2}.
\end{align}
for some unitary $U \in \M$.
Then we present several consequences of \eqref{MainRR} including inequalities such as \eqref{Ming-tr}, \eqref{Lee-Gmean} and \eqref{Pan-Fu}.
Finally, we present some inequalities that connect the diagonal components to the real part of the off-diagonal components of H.

\section{Preliminaries}
In this section we present some basic properties of positive and PPT blocks. These properties are summarized in
Proportion \ref{Moh-Nov}, \ref{Moh-Nov-2} and \ref{Moh-Nov-3}. To make this note self-contained, we outline the proofs of these propositions. We also recall some important facts about weighted geometric mean of two positive matrices.

\begin{proposition}\label{Moh-Nov}
If
$
H=
\left(
\begin{array}{cc}
A & X \\
X^* & B \\
\end{array}
\right) \geq 0,
$
then
\begin{enumerate}

\item
$
\left(
\begin{array}{cc}
A & -X \\
-X^* & B \\
\end{array}
\right) \geq 0
$
and
$
\left(
\begin{array}{cc}
B & X^* \\
X & A \\
\end{array}
\right) \geq 0.
$

\item
$
\left(
\begin{array}{cc}
 0& X \\
X^* & 0 \\
\end{array}
\right)
\leq
\frac{1}{2}
H.
$
\end{enumerate}
\end{proposition}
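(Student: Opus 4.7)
The plan is to deduce both parts from the observation that $H$'s positivity is preserved under conjugation by block unitaries, and that the two matrices in part (1) are precisely such conjugates of $H$.

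For the first matrix in part (1), I would take the block unitary $W = \begin{pmatrix} I & 0 \\ 0 & -I \end{pmatrix}$ and compute $W^* H W$; a direct $2\times 2$ block multiplication gives $\begin{pmatrix} A & -X \\ -X^* & B \end{pmatrix}$. Since $W$ is unitary and $H \geq 0$, this conjugate is positive semidefinite. For the second matrix, I would use the flip $V = \begin{pmatrix} 0 & I \\ I & 0 \end{pmatrix}$, which is also unitary, and compute $V^* H V = \begin{pmatrix} B & X^* \\ X & A \end{pmatrix}$.

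For part (2), I would reduce it to part (1) by a direct subtraction. Specifically,
\begin{equation*}
\frac{1}{2} H - \begin{pmatrix} 0 & X \\ X^* & 0 \end{pmatrix}
= \frac{1}{2}\begin{pmatrix} A & -X \\ -X^* & B \end{pmatrix},
\end{equation*}
and the right-hand side is positive semidefinite by the first assertion of part (1). Rearranging yields the desired Loewner inequality.

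There is no real obstacle here: the entire argument rests on choosing the right block unitaries, and the identities are routine $2\times 2$ block computations. The only thing to be careful about is the sign bookkeeping in the conjugation $W^* H W$ and the subtraction in part (2), to make sure the off-diagonal sign flip leaves the diagonal blocks $A, B$ untouched and produces exactly the matrix from part (1).
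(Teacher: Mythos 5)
Your proof is correct and follows essentially the same route as the paper: conjugate $H$ by the block unitaries $\mathrm{diag}(I,-I)$ (the paper uses $\mathrm{diag}(-I,I)$, which is equivalent) and the flip $\left(\begin{smallmatrix}0&I\\I&0\end{smallmatrix}\right)$ for part (1), then obtain part (2) by the same subtraction identity. In fact your computation $V^*HV=\left(\begin{smallmatrix}B&X^*\\X&A\end{smallmatrix}\right)$ conjugates $H$ itself, whereas the paper's display mistakenly shows the middle factor as $\left(\begin{smallmatrix}A&X^*\\X&B\end{smallmatrix}\right)$; your version is the correct one.
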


\begin{proof}
To see the first part, observe that
$$
\left(
\begin{array}{cc}
A & -X \\
-X^* & B \\
\end{array}
\right)
=
\left(
\begin{array}{cc}
-I & 0 \\
0 & I \\
\end{array}
\right)
\left(
\begin{array}{cc}
A & X \\
X^* & B \\
\end{array}
\right)
\left(
\begin{array}{cc}
-I & 0 \\
0 & I \\
\end{array}
\right)
\geq 0
$$
and
$$
\left(
\begin{array}{cc}
B & X^* \\
X &A  \\
\end{array}
\right)
=
\left(
\begin{array}{cc}
0& I \\
I &0 \\
\end{array}
\right)
\left(
\begin{array}{cc}
A & X^* \\
X & B \\
\end{array}
\right)
\left(
\begin{array}{cc}
0 & I \\
I & 0 \\
\end{array}
\right)
\geq 0.
$$
For the second part, notice that
$$
\frac{1}{2}
H
-
\left(
\begin{array}{cc}
 0& X \\
X^* & 0 \\
\end{array}
\right)
=
\frac{1}{2}
\left(
\begin{array}{cc}
A & -X \\
-X^* & B \\
\end{array}
\right)
\geq 0.
$$
\end{proof}

\begin{proposition}\label{Moh-Nov-2}
If
$
\left(
\begin{array}{cc}
A & X \\
X^* & B \\
\end{array}
\right)
$
is PPT, then the following blocks are positive semidefinite. \\
$
\left(
\begin{array}{cc}
A & \mp X \\
\mp X^* & B \\
\end{array}
\right)
,
\left(
\begin{array}{cc}
A & \mp X^* \\
\mp X & B \\
\end{array}
\right)
,
\left(
\begin{array}{cc}
B & \mp X^* \\
\mp X & A \\
\end{array}
\right),
\left(
\begin{array}{cc}
B & \mp X \\
\mp X^* & A \\
\end{array}
\right).
$
\end{proposition}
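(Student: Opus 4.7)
The plan is to reduce everything to Proposition \ref{Moh-Nov}; no new ideas are needed.

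First I would unpack the PPT hypothesis into its two direct consequences:
$$
\left(\begin{array}{cc} A & X \\ X^* & B \end{array}\right)\geq 0
\qquad\text{and}\qquad
\left(\begin{array}{cc} A & X^* \\ X & B \end{array}\right)\geq 0.
$$
These are already the $+$ sign cases of the first and second families appearing in the statement, so nothing more needs to be said about those two blocks.

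Next I would feed the first of these blocks into Proposition \ref{Moh-Nov}(1). Its first conclusion flips the sign of the off-diagonal entry and delivers the $-$ sign case of the first family; its second conclusion swaps the diagonal blocks and yields $\left(\begin{array}{cc} B & X^* \\ X & A \end{array}\right)\geq 0$, which is the $+$ sign case of the third family. Then I would feed this swapped block once more into Proposition \ref{Moh-Nov}(1): the first conclusion flips the sign of its off-diagonal entry and produces the $-$ sign case of the third family.

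Running exactly the same two applications of Proposition \ref{Moh-Nov}(1) with the partial transpose block $\left(\begin{array}{cc} A & X^* \\ X & B \end{array}\right)$ as the starting point supplies the $-$ sign case of the second family, then $\left(\begin{array}{cc} B & X \\ X^* & A \end{array}\right)\geq 0$, and finally its sign-flipped counterpart, thereby covering both signs of the fourth family. The only thing to watch is bookkeeping: keeping track of which swap or sign flip lands in which of the four families of the statement. There is no genuine mathematical obstacle here, since Proposition \ref{Moh-Nov} is doing all of the work and the PPT assumption is used precisely to make both conjugation patterns $X, X^*$ available as starting points.
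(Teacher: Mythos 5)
Your proposal is correct and follows essentially the same route as the paper: the $+$ cases of the first two families are the PPT definition itself, the sign flips come from the first conclusion of Proposition \ref{Moh-Nov}(1), and the block-swapped families come from the swap conjugation, which you access through the second conclusion of Proposition \ref{Moh-Nov}(1) while the paper performs that conjugation explicitly. This is a cosmetic difference only; the underlying argument is identical.
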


\begin{proof}
The semi positivity of the first two blocks follows from the definition of PPT and the first part of Proposition \ref{Moh-Nov}.
The semi positivity of the second two blocks results from conjugating the first two blocks by the unitary
$
\left(
\begin{array}{cc}
0 & I \\
I & 0 \\
\end{array}
\right).
$
\end{proof}

\begin{proposition}\label{Moh-Nov-3}
Let
$
H=
\left(
\begin{array}{cc}
A & X \\
X^* & B \\
\end{array}
\right)
$
be PPT. Then
$$
\left(
\begin{array}{cc}
A & e^{i \theta}X \\
e^{ - i \theta}X^* & B \\
\end{array}
\right)
\quad
\text{and}
\quad
\left(
\begin{array}{cc}
\frac{A+B}{2}  & X \\
X^* & \frac{A+B}{2} \\
\end{array}
\right)
\quad
\text{are PPT.}
\quad
$$
\end{proposition}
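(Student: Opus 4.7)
The plan is to handle the two claims separately, each by a short reduction to results already established.

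For the phase-shifted block, I would use a diagonal unitary conjugation. Set
\[
U_\theta = \begin{pmatrix} e^{-i\theta/2} I_n & 0 \\ 0 & e^{i\theta/2} I_n \end{pmatrix},
\]
which is a unitary in $\mathbb{M}_{2n}$. A direct $2\times 2$ block multiplication shows
\[
U_\theta^* H U_\theta = \begin{pmatrix} A & e^{i\theta} X \\ e^{-i\theta} X^* & B \end{pmatrix},
\]
so positivity of $H$ carries over to the new block. For its partial transpose, the analogous computation gives
\[
U_\theta\, H^{\tau}\, U_\theta^* = \begin{pmatrix} A & e^{-i\theta} X^* \\ e^{i\theta} X & B \end{pmatrix},
\]
which is exactly the partial transpose of the phase-shifted block; since $H^\tau\geq 0$ by hypothesis, this too is positive. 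Hence the phase-shifted block is PPT.

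For the averaged-diagonal block, I would combine $H$ with a sibling PPT block supplied by Proposition \ref{Moh-Nov-2}. That proposition tells us, in particular, that $H' := \begin{pmatrix} B & X \\ X^* & A \end{pmatrix}$ is PPT. Since positivity of a matrix and the partial transpose operation are both preserved by addition and by multiplication with a positive scalar, the average
\[
\tfrac{1}{2}(H + H') = \begin{pmatrix} \tfrac{A+B}{2} & X \\ X^* & \tfrac{A+B}{2} \end{pmatrix}
\]
is PPT.

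I do not anticipate any real obstacle here. The only care needed is the $e^{\pm i\theta/2}$ bookkeeping in the diagonal unitary, so that the conjugation distributes the phase to the $(1,2)$ entry as $e^{i\theta}$ rather than with a wrong sign; once that choice is made, each part is essentially a one-line verification.
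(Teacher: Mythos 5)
Your proof is correct. For the phase-shifted block you do essentially what the paper does: conjugate $H$ and $H^{\tau}$ by a diagonal unitary (the paper uses $\mathrm{diag}(e^{i\theta}I, I)$, you use $\mathrm{diag}(e^{-i\theta/2}I, e^{i\theta/2}I)$; both distribute the phase correctly), so that part is the same argument in different clothing. For the averaged-diagonal block, however, your route is genuinely different and noticeably shorter. The paper proceeds through a chain of Loewner-order manipulations: it adds $\left(\begin{smallmatrix} B & -X \\ -X^* & A \end{smallmatrix}\right)$ to $H$ to get $\tfrac12 H \leq \mathrm{diag}\bigl(\tfrac{A+B}{2}, \tfrac{A+B}{2}\bigr)$, combines this with $\left(\begin{smallmatrix} 0 & X \\ X^* & 0 \end{smallmatrix}\right) \leq \tfrac12 H$ from Proposition \ref{Moh-Nov}, deduces positivity of the block with off-diagonal $-X$, flips the sign, and then repeats the whole argument for the partial transpose. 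You instead observe that $H' = \left(\begin{smallmatrix} B & X \\ X^* & A \end{smallmatrix}\right)$ is itself PPT (both it and its partial transpose appear in the list of Proposition \ref{Moh-Nov-2}), and that the PPT property is closed under convex combinations because positivity and the partial transpose are both additive; the average $\tfrac12(H+H')$ is exactly the desired block. This buys you a one-line proof that handles $H$ and $H^\tau$ simultaneously, whereas the paper's version requires a separate ``similar argument'' for the transpose. The only point worth making explicit in a written version is the closure of PPT under sums, which you correctly justify by linearity of $\tau$.
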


\begin{proof}
Let
$
W=
\left(
\begin{array}{cc}
e^{i \theta}  I & 0 \\
0 & I \\
\end{array}
\right).
$
Notice that
$$
\left(
\begin{array}{cc}
A & e^{i \theta}X \\
e^{ - i \theta}X^* & B \\
\end{array}
\right)
= W
\left(
\begin{array}{cc}
A & X \\
X^* & B \\
\end{array}
\right)
W^*
\geq 0,
$$
and
$$
\left(
\begin{array}{cc}
A & e^{- i \theta}X^* \\
e^{i \theta}X & B \\
\end{array}
\right)
= W^*
\left(
\begin{array}{cc}
A & X^* \\
X & B \\
\end{array}
\right)
W
\geq 0.
$$
This implies that the first block is PPT.

Since
$
\left(
\begin{array}{cc}
A & X \\
X^* & B \\
\end{array}
\right)
$
is PPT, Proposition \ref{Moh-Nov-2} implies that
$
\left(
\begin{array}{cc}
B & -X \\
-X^* & A \\
\end{array}
\right)
\geq 0.
$
Therefore,
$$
H=
\left(
\begin{array}{cc}
A & X \\
X^* & B \\
\end{array}
\right)
\leq
\left(
\begin{array}{cc}
A & X \\
X^* & B \\
\end{array}
\right)
+
\left(
\begin{array}{cc}
B & -X \\
-X^* & A \\
\end{array}
\right)
=
\left(
\begin{array}{cc}
A+B & 0 \\
0 & A+B   \\
\end{array}
\right),
$$
and so
\begin{equation}\label{A}
\frac{1}{2} H
\leq
\left(
\begin{array}{cc}
\frac{A+B}{2} & 0 \\
0 & \frac{A+B}{2}  \\
\end{array}
\right).
\end{equation}
The second part of Proposition \ref{Moh-Nov} implies that

\begin{equation}\label{AA}
\left(
\begin{array}{cc}
 0& X \\
X^* & 0 \\
\end{array}
\right)
\leq
\frac{1}{2}
H.
\end{equation}
Hence, combining \eqref{A} and \eqref{AA} gives
$$
\left(
\begin{array}{cc}
0 & X \\
X^* & 0 \\
\end{array}
\right)
\leq
\left(
\begin{array}{cc}
\frac{A+B}{2} & 0 \\
0 & \frac{A+B}{2} \\
\end{array}
\right).
$$
Consequently,
$
\left(
\begin{array}{cc}
\frac{A+B}{2} & -X \\
-X^* & \frac{A+B}{2} \\
\end{array}
\right) \geq 0,
$
and then, by Proposition \ref{Moh-Nov}, we have
$
\left(
\begin{array}{cc}
\frac{A+B}{2} & X \\
X^* & \frac{A+B}{2} \\
\end{array}
\right) \geq 0.
$
A similar argument implies that
$
\left(
\begin{array}{cc}
\frac{A+B}{2}  & X^* \\
X & \frac{A+B}{2} \\
\end{array}
\right) \geq 0.
$
This proves that
$
\left(
\begin{array}{cc}
\frac{A+B}{2} & X \\
X^* & \frac{A+B}{2} \\
\end{array}
\right)
$
is PPT.
\end{proof}

In the following paragraph, we present the definition of the weighted geometric mean of two positive
matrices and then we state some of its properties.

For positive definite $X, Y \in \M$ and $t \in [0,1]$, the weighted geometric mean of $X$ and $Y$ is defined as follows
$$
X \#_{t} Y= X^{1/2}(X^{-1/2}Y X^{-1/2} )^{t}X^{1/2}.
$$
When $t=\frac{1}{2}$, we drop $t$ from the above definition, and we simply write $X \#Y$ and call it the geometric mean of $X$ and $Y$.
It is well-known that

\begin{equation}\label{A-G-mean inequality}
X \#_t Y \leq (1-t) X + t Y.
\end{equation}
See \cite[Chapter 4]{B-Book 2-2007}.

When $t=\frac{1}{2}$ an extremal property of the geometric mean of positive definite $X, Y \in \M$ is given as follows
\begin{equation}\label{Geometric mean characterization}
 X \# Y = \max \left\{ Z : Z=Z^*,
\left(
  \begin{array}{cc}
    X & Z \\
    Z & Y \\
  \end{array}
\right)\geq 0
\right\}.
\end{equation}
See \cite[Theorem 4.1.3]{B-Book 2-2007}.

For every unitarily invariant norm we have
\begin{align}\label{Norm Inq. for Geometric mean}
||X \#_t Y|| &\leq  || X^{1-t}Y^{t}||   \notag  \\
&\leq  || (1-t)X+ t Y||.
\end{align}

See \cite{Bhatia-Grover-2012}.

\section{Main results}
We start this section by the following two lemmas.
\begin{lemma}\label{TA}
If
$
\left(
\begin{array}{cc}
A_j & X \\
X^* & B_j \\
\end{array}
\right)
\geq 0 \quad (j=1,2),
$
then
$$
\left(
\begin{array}{cc}
A_1 \#_t A_2 & X \\
X^* & B_1 \#_t B_2  \\
\end{array}
\right)
\geq 0, \forall t \in [0,1].
$$
\end{lemma}
\begin{proof}
Without loss of generality we may assume that for $j=1,2$ the block
$
\left(
\begin{array}{cc}
A_j & X \\
X^* & B_j \\
\end{array}
\right)
$
is positive definite, otherwise we use the well know continuous argument. Therefore, by Schure criterion \eqref{Schure criterion}, we have
$$
X^* A_1^{-1} X  \leq B_1 \quad \text{and} \quad X^* A_2^{-1} X \leq B_2.
$$
Observe,
\begin{align}
X^* ( A_1 \#_t A_2)^{-1}) X & = X^* ( A_1^{-1} \#_t A_2^{-1} ) X  \notag \\
& = (  X^*A_1^{-1} X)  \#_t   (X^*A_2^{-1} ) X   \notag \\
& \leq B_1 \#_t   B_2 \quad   \text{(by the increasing property of means)}. \notag
\end{align}
And so
$
B_1 \#_t   B_2 \geq  X^* ( A_1 \#_t A_2)^{-1}) X .
$
This implies the result.
\end{proof}

\begin{lemma}\label{Geometric means-PPT}
If
$
\left(
\begin{array}{cc}
A & X \\
X^* & B \\
\end{array}
\right)
$
is PPT, then for every $t \in [0,1]$ the block
$
\left(
\begin{array}{cc}
A\#_t B & X \\
X^* & A \#_{1-t} B \\
\end{array}
\right)
$
is PPT.
\end{lemma}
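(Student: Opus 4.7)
The plan is to deduce the claim by two direct applications of Lemma \ref{TAndo}, using the four positivity conditions provided by Proposition \ref{Moh-Nov-2}.

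First I would record what PPT of $H$ gives us through Proposition \ref{Moh-Nov-2}: both
$$
\left(
\begin{array}{cc}
A & X \\
X^* & B \\
\end{array}
\right) \geq 0
\quad \text{and} \quad
\left(
\begin{array}{cc}
B & X \\
X^* & A \\
\end{array}
\right) \geq 0,
$$
and also
$$
\left(
\begin{array}{cc}
A & X^* \\
X & B \\
\end{array}
\right) \geq 0
\quad \text{and} \quad
\left(
\begin{array}{cc}
B & X^* \\
X & A \\
\end{array}
\right) \geq 0.
$$

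Next I would feed the first pair into Lemma \ref{TAndo} with $A_1=A$, $A_2=B$, $B_1=B$, $B_2=A$, and the common off-diagonal entry $X$. The lemma returns the positive block
$$
\left(
\begin{array}{cc}
A\# B & X \\
X^* & B\# A \\
\end{array}
\right) \geq 0,
$$
and the symmetry $A\#B=B\#A$ identifies this with the block we want to be positive. Applying Lemma \ref{TAndo} in exactly the same way to the second pair (with off-diagonal $X^*$) yields
$$
\left(
\begin{array}{cc}
A\# B & X^* \\
X & B\# A \\
\end{array}
\right) \geq 0,
$$
which is precisely the partial transpose of the previous block. Combining the two inequalities gives that
$\left(\begin{smallmatrix} A\#B & X \\ X^* & A\#B \end{smallmatrix}\right)$ is PPT.

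There is really no substantive obstacle here: once Lemma \ref{TAndo} is available, the whole argument is a careful bookkeeping of which four positivities are guaranteed by the PPT hypothesis and a matching of the diagonal entries so that both $A_1\#A_2$ and $B_1\#B_2$ collapse to $A\#B$. The only point that needs a line of justification is the use of $A\#B=B\#A$, which is recorded in \eqref{A-G-mean inequality}.
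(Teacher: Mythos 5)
Your proof is correct and is exactly the argument the paper intends: the paper gives no written proof, stating only that the lemma ``results from the first lemma and Proposition \ref{Moh-Nov-2},'' and your two applications of Lemma \ref{TAndo} (to the pairs with off-diagonals $X$ and $X^*$, using $A\#B=B\#A$) fill in precisely that sketch.
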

\begin{proof}
The result follows directly from Lemma \ref{TA}, Proposition \ref{Moh-Nov-2} and the fact that $B\#_t A =A\#_{1-t} B$.
\end{proof}

Recall that the absolute value of $X \in \M$ is defined as $|X|=(X^*X)^{1/2}$.

The main result can be stated as follows.

\begin{theorem}\label{Main A result}
Let
$
\left(
\begin{array}{cc}
A & X \\
X^* & B \\
\end{array}
\right)
$
be PPT and let $X=U|X|$ be the polar decomposition of $X$. Then
$$
|X| \leq \left ( A \#_t B \right)  \#  U^*(A \#_{1-t} B)U,   \quad  \forall  t \in [0,1].
$$
\end{theorem}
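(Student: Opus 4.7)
The plan is to combine Lemma \ref{Geometric means-PPT}, the polar decomposition of $X$, and the extremal characterization \eqref{Geometric mean characterization} of the geometric mean. The strategy is to reduce to a block whose off-diagonal entry is the Hermitian matrix $|X|$, so that the characterization can be applied directly.

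First, by Lemma \ref{Geometric means-PPT}, from the PPT assumption on $H$ we obtain that
$$
\left(\begin{array}{cc} A\#B & X \\ X^* & A\#B \end{array}\right)
$$
is PPT, hence in particular positive semidefinite. Writing the polar decomposition $X = U|X|$ with $U \in \M$ unitary, I would then conjugate the above block by the block-unitary $W = \left(\begin{array}{cc} U^* & 0 \\ 0 & I \end{array}\right)$. Using the identities $U^*X = U^*U|X| = |X|$ and $X^*U = |X|$, this conjugation produces
$$
\left(\begin{array}{cc} U^*(A\#B)U & |X| \\ |X| & A\#B \end{array}\right) \geq 0.
$$

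At this point, since $|X|$ is Hermitian and the displayed block is positive semidefinite, the extremal characterization \eqref{Geometric mean characterization} of the geometric mean immediately yields
$$
|X| \leq U^*(A\#B)U \,\#\, (A\#B) = (A\#B) \,\#\, U^*(A\#B)U,
$$
where the last equality uses the symmetry $X\#Y = Y\#X$ from \eqref{A-G-mean inequality}. This completes the argument.

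The only delicate point I anticipate is that the characterization \eqref{Geometric mean characterization} is stated for positive definite matrices, whereas in the theorem $A$ and $B$ are only assumed positive semidefinite. The standard remedy is a perturbation argument: replace $A$ and $B$ by $A + \varepsilon I$ and $B + \varepsilon I$ (which keep the block PPT up to an additive PSD correction), apply the inequality at the positive definite level, and then let $\varepsilon \to 0^+$, exploiting the continuity of the geometric mean. This is routine and should not cause genuine difficulty.
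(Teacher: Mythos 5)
Your proposal is correct and follows essentially the same route as the paper: invoke Lemma \ref{Geometric means-PPT} to get the positive block with diagonal $A\#B$, conjugate by the block unitary built from the polar factor $U$ so the off-diagonal becomes $|X|$, and apply the extremal characterization \eqref{Geometric mean characterization}. The closing remark on perturbing to the positive definite case is a careful (and harmless) addition that the paper leaves implicit.
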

\begin{proof}

Let $X=U|X|$ be the polar decomposition of $X$. Let $ W$ be the unitary defined as
$
W=
\left(
\begin{array}{cc}
U & 0 \\
0 & I \\
\end{array}
\right).
$
Since
$
\left(
\begin{array}{cc}
A & X \\
X^* & B \\
\end{array}
\right)
$
is PPT, Lemma \ref{Geometric means-PPT} implies that
$
\left(
\begin{array}{cc}
A \#_t B & X \\
X^* & A \#_{1-t} B \\
\end{array}
\right) \geq 0,
$
for every $t \in [0,1]$.
Therefore,
$$
W^*
\left(
\begin{array}{cc}
A \#_t B & X \\
X^* & A \#_{1-t} B \\
\end{array}
\right)
W
=
\left(
\begin{array}{cc}
U^*(A \#_t B)U  &  |X| \\
|X|  & A \#_{1-t} B \\
\end{array}
\right)
\geq 0.
$$
By the extremal property of the geometric mean \eqref{Geometric mean characterization} we get

$$
|X| \leq \left ( A \#_t B \right)  \# U^*(A \#_{1-t} B)U .
$$
This proves the result.
\end{proof}

\begin{corollary}\label{Main Cor result}
Let
$
\left(
\begin{array}{cc}
A & X \\
X^* & B \\
\end{array}
\right)
$
be PPT and let $X=U|X|$ be the polar decomposition of $X$. Then for some unitary $U \in \M$
$$
|X| \leq \frac{ (A \#_t B)  +  U^*(A \#_{1-t} B)U }{2}, \quad \forall t \in [0,1].
$$
In particular, 
$$
|X| \leq \frac{ (A \# B)  +  U^*(A \# B)U }{2}.
$$
\end{corollary}

We remark that the particular case $t=1/2$ of Theorem \ref{Main A result} and  Corollary \ref{Main Cor result} can be found in \cite{Pan-Fu-2021} and  \cite{Lee-2015}, respectively.

\begin{corollary}\label{norm of GM}
If
$
\left(
\begin{array}{cc}
A & X \\
X^* & B \\
\end{array}
\right)
$
is PPT, then for every unitarily invariant norm $|| \cdot ||$  and for $t \in [0,1]$
\begin{align}
||X||  & \leq ||      ( A \#_t B)  \#  U^*( A \#_{1-t} B) U    || \notag \\
& \leq ||      ( A \#_t B)^{1/2} U^*( A \#_{1-t} B)^{1/2} U    || \notag \\
& \leq \left \| \frac{  ( A \#_t B) +  U^*( A \#_{1-t} B)U }{2} \right \|  \notag \\
& \leq \frac {\left \|A \#_t B \right \| +  \left \|A \#_{1-t} B  \right \| }{2} \notag \\
& \leq  \frac{||A^{1-t}B^{t}||+ ||A^{t}B^{1-t}||}{2} \notag \\
& \leq  \frac{ ||(1-t)A+ t B|| + ||t A+ (1-t) B|| }{2}, \notag
\end{align}
for some unitary $U \in \M$.
\end{corollary}

\begin{proof}
The first inequality follows directly from Theorem \ref{Main A result}. The fourth is just the triangle inequality. 
The other inequalities follow from \eqref{Norm Inq. for Geometric mean}.
\end{proof}

In particulary, when $t=1/2$  we have the following result.

\begin{corollary}\label{norm of GM1}
If
$
\left(
\begin{array}{cc}
A & X \\
X^* & B \\
\end{array}
\right)
$
is PPT, then for every unitarily invariant norm $|| \cdot ||$  and for $t \in [0,1]$
\begin{align}
||X||  & \leq ||      ( A \# B)  \#  U^*( A \# B) U    || \notag \\
& \leq ||      ( A \# B)^{1/2} U^*( A \# B)^{1/2} U    || \notag \\
& \leq \left \| \frac{  ( A \# B) +  U^*( A \# B)U }{2} \right \|  \notag \\
& \leq \left \|A \# B \right \| \notag \\
& \leq  ||A^{1/2}B^{1/2}||  \notag \\
& \leq  \left\| \frac{A+B}{2} \right \| , \notag
\end{align}
for some unitary $U \in \M$.
\end{corollary}

If we square the inequalities in Corollary \ref{norm of GM1} and choose  the Hilbert-Schmidt norm, $|| \cdot ||_2$, we get the following result,
which is an improvement of the trace inequality  \eqref{Ming-tr}. Recall that the Hilbert-Schmidt norm is defined as $|| X ||_2^2 = tr (X^*X)$.

\begin{corollary}\label{trace-TTB}
If
$
\left(
\begin{array}{cc}
A & X \\
X^* & B \\
\end{array}
\right)
$
is PPT, then

\begin{align}
tr(X^*X) & \leq tr(A \# B)^2 \notag \\
& \leq  tr( AB). \notag \notag \\
& \leq tr \left( \frac{A+B}{2} \right)^2. \notag
\end{align}
\end{corollary}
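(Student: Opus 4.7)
The approach is to specialize Corollary \ref{norm of GM} to the Hilbert--Schmidt norm and square. Since the Hilbert--Schmidt norm is a unitarily invariant norm and all the quantities appearing in Corollary \ref{norm of GM} are non-negative, I would simply apply it with $\|\cdot\|=\|\cdot\|_2$ to obtain
\begin{equation*}
\|X\|_2 \;\le\; \|A\#B\|_2 \;\le\; \|A^{1/2}B^{1/2}\|_2 \;\le\; \tfrac12\|A+B\|_2,
\end{equation*}
and then square termwise. The content is then just translating each squared Hilbert--Schmidt norm into its trace form.

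The bookkeeping proceeds as follows. First, $\|X\|_2^2 = tr(X^*X)$ by definition. Second, because $A\#B$ is positive semidefinite, $(A\#B)^*(A\#B)=(A\#B)^2$, so $\|A\#B\|_2^2 = tr((A\#B)^2)$. Third, using the cyclicity of the trace,
\begin{equation*}
\|A^{1/2}B^{1/2}\|_2^2 \;=\; tr\bigl(B^{1/2}A^{1/2}\cdot A^{1/2}B^{1/2}\bigr) \;=\; tr(B^{1/2}AB^{1/2}) \;=\; tr(AB).
\end{equation*}
These three identities furnish the first two inequalities in the corollary exactly as stated. For the final inequality, squaring the rightmost term of Corollary \ref{norm of GM} gives $tr(AB)\le \tfrac14 tr((A+B)^2)$; this bound is already at least as strong as $\tfrac12 tr((A+B)^2)$, so the last inequality follows trivially by a factor-of-two weakening.

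There is essentially no obstacle here --- the result is a direct numerical consequence of the preceding corollary combined with the identity $\|Y\|_2^2=tr(Y^*Y)$ and one application of trace-cyclicity. The only minor care is in noting that the step $tr(AB)\le\tfrac12 tr((A+B)^2)$ is a loose (but convenient) restatement of the tighter $tr(AB)\le\tfrac14 tr((A+B)^2)$ produced by squaring; one could equivalently justify it directly from $tr((A-B)^2)\ge 0$, which gives $2\,tr(AB)\le tr(A^2)+tr(B^2)\le tr((A+B)^2)$ for $A,B\ge 0$.
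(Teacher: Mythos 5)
Your proposal is correct and is exactly the paper's argument: the paper derives this corollary by squaring the chain in Corollary \ref{norm of GM} for the Hilbert--Schmidt norm, and you have simply spelled out the trace identities (including the harmless weakening from $\tfrac14\,tr((A+B)^2)$ to $\tfrac12\,tr((A+B)^2)$) that the paper leaves implicit.
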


Finally, we study the connection between the diagonal components and the real part of the off-diagonal components of the PPT block H.
Before doing so, we recall that every $X \in \M$ admits what is called the cartesian decomposition
$$
X=Re(X)+iIm(X),
$$ where $Re(X)$ and $Im(X)$ are the
Hermitian matrices defined as $Re(X)=\frac{X+X^*}{2}, Im(X)=\frac{X-X^*}{2i}$ and are known, respectively, as the real and
the imaginary parts of $X$.

\begin{theorem}\label{Realprat PPT}
Let
$
\left(
\begin{array}{cc}
A & X \\
X^* & B \\
\end{array}
\right)
$
be PPT. Then $\forall t \in [0,1]$
$$
Re(X) \leq (A\#_t B) \# (A\#_{1-t} B) \leq \frac{(A\#_t B) + (A\#_{1-t} B)}{2},
$$
and 
$$
Im(X) \leq (A\#_t B) \# (A\#_{1-t} B) \leq \frac{(A\#_t B) + (A\#_{1-t} B)}{2}.
$$
\end{theorem}

\begin{proof}
In first part, the second inequality follows from \eqref{Norm Inq. for Geometric mean}. For the second inequality, notice that, 
by Lemma \ref{Geometric means-PPT}, we have
$$
\left(
\begin{array}{cc}
A\#_t B &  X \\
X^* & A \#_{1-t} B \\
\end{array}
\right) \geq 0,
\quad  \text{and} \quad
\left(
\begin{array}{cc}
A\#_t B & X^* \\
 X &A\#_{1-t} B \\
\end{array}
\right) \geq 0,
$$
for $t \in [0,1] $.
Therefore,
$$
\left(
\begin{array}{cc}
A\#_t B  & Re(X) \\
Re(X) & A\#_{1-t} B   \\
\end{array}
\right)
=
\frac{1}{2}
\left(
\begin{array}{cc}
A\#_t B  & X \\
X^* & A\#_{1-t} B  \\
\end{array}
\right)
+
\frac{1}{2}
\left(
\begin{array}{cc}
A\#_t B & X^* \\
X &A\#_{1-t} B  \\
\end{array}
\right)
\geq 0.
$$
Therefore, by the extremal property of the geometric mean we have 

$$
Re(X) \leq  (A\#_t B ) \# (A\#_{1-t} B )
$$
This implies the first inequality.
To prove the second inequality just applying the first inequality to the block
$G=
\left(
\begin{array}{cc}
A  & -i X \\
i X^* & B  \\
\end{array}
\right).
$
Note that $G$ is PPT by Proposition \ref{Moh-Nov-3}.
\end{proof}

\begin{corollary}
Let
$
\left(
\begin{array}{cc}
A & X \\
X & B \\
\end{array}
\right) \geq 0.
$
If $X$ is Hermitian, then
$$
X \leq (A\#_t B) \# (A\#_{1-t} B) \leq \frac{(A\#_t B) + (A\#_{1-t} B)}{2}, \forall t \in [0,1].
$$
\end{corollary}

\section*{Declarations:}

\subsection*{Competing interests:}
The authors declare that they have no conflict of interest.

\subsection*{Authors' contributions:}
Not applicable.

\subsection*{Funding:}
Not applicable.

\subsection*{Availability of data and materials:}
Not applicable.

\bibliographystyle{spbasic}

\end{document}